\newcommand\blfootnote[1]{%
  \begingroup
  \renewcommand\thefootnote{}\footnote{#1}%
  \addtocounter{footnote}{-1}%
  \endgroup
}
\def\inf{\operatorname{inf}}
 \newtheorem{lemma}{Lemma}[section]
 \newtheorem{corollary}[lemma]{Corollary}
 \newtheorem{theorem}[lemma]{Theorem}
 \newtheorem{definition}[lemma]{Definition}
 \newtheorem{remark}[lemma]{Remark}
\begin{document}
\title{\fontsize{14}{0}\selectfont Wiener-Lebesgue point property for Sobolev Functions on Metric Spaces
%Wiener-type Lebesgue Point Property for Sobolev Functions on Metric Spaces
%A Wiener Integral Condition for Sobolev Functions on Metric Spaces
}
\author{\fontsize{11}{0}\selectfont 
M. Ashraf Bhat and G. Sankara Raju Kosuru$^*$ \\
%EndAName
\fontsize{10}{0}\textit{{Department of Mathematics, Indian
Institute of Technology Ropar, Rupnagar-140001, Punjab, India.}}
}
\date{}
\maketitle

\thispagestyle{empty}

\begin{abstract}
We establish a Wiener-type integral condition for first-order Sobolev functions defined on a complete, doubling metric measure space supporting a Poincar\'e inequality. It is stronger than the Lebesgue point property, except for a marginal increase in the capacity of the set of non-Lebesgue points.

%except that it has a slightly higher capacity for the set of non-Lebesgue points.
\end{abstract}
    
%\te   xtit{Keywords}\textbf{:} Keywords
%\blfootnote{$^{*}$Corresponding Author} 
%\blfootnote{G. Sankara Raju Kosuru: raju@iitrpr.ac.in} \blfootnote{%
%Mohd Ashraf Bhat: ashraf74267@gmail.com} 

%2020 \textit{Mathematics Subject Classification:} ABC.\vspace{1cm}

%%%%%%%%%%%%%%% Section :  Introduction %%%%%%%%%%%%%%%%%

%{\large{ Wiener Type Lebesgue Point Property for Sobolev Functions on Metric Spaces}}

\textit{Keywords}\textbf{:} Lebesgue points, Sobolev spaces, doubling measures, capacity.

\blfootnote{$^{*}$Corresponding Author} 
\blfootnote{G. Sankara Raju Kosuru: raju@iitrpr.ac.in} \blfootnote{%
Mohd Ashraf Bhat: ashraf74267@gmail.com}

2020 \textit{Mathematics Subject Classification:} Primary 46E36; Secondary
31C15, 31C40.

\section{Introduction}

In a metric measure space $(X,d, \mu)$, a point $x \in X$ is said to be a Lebesgue point if, for any locally integrable function $u$ on $X$, the average value of the integral over balls centered at $x$ converges to $u(x)$ as the radii of the balls approach zero, {\it i.e.,} 

$$u(x)= \underset{r \to 0}{\lim} \frac{1}{\mu(B(x,r))}\int_{B(x,r)} u ~d\mu.$$

The  Lebesgue differentiation theorem in classical analysis (on Euclidean spaces) guarantees that almost every point of a locally integrable function defined on $\mathbb{R}^n$ is a Lebesgue point. If the functions are sufficiently regular, namely  they are Sobolev functions, then even more is true, {\it viz.}, the exceptional sets are of zero capacity. Besides, they exhibit a Lebesgue point property of the Weiner integral type \cite{Strong}, which is more potent than the classical Lebesgue point property. However, the exceptional sets have a slightly larger capacity. The aforementioned Weiner-Lebesgue point property on $\mathbb{R}^n$ is based on a classical result due to Meyers (see \cite[Theorem 2.1]{Meyers}).

Furthermore, the Lebesgue differentiation theorem extends to certain classes of metric spaces called the spaces of homogeneous type (see, for instance, \cite[p. 150]{Krantz}). The Sobolev functions on such metric spaces mirror the pointwise behaviour shown by them in the Euclidean case, {\it i.e.,} the sets of non-Lebesgue points are of capacity zero (see \cite[section 5.6]{MR2867756}, \cite[section 9.2]{Book} and \cite{LPP}). 

%The proofs {\color{blue} in the metric setting} deviate significantly from the classical ones. This divergence is due to the unavailability of tools such {\color{blue} as the Bestovitch covering theorem}, extension results for Sobolev functions, and boundedness of Hardy-Littlewood maximal operator in metric spaces. Indeed, these tools are used in standard proofs of refinements of Lebesgue's theorem on the Euclidean space (see, for instance, \cite{Evans, Ziemer}). {\color{blue} However, the proof of Wiener-Lebesgue point property in the Euclidean setting do not require these tools (see \cite{Strong}).}

The central aim of this paper is to obtain a  Wiener-type integral condition for Sobolev functions on metric measure spaces. The Lebesgue point property is an implication of this condition. However, the sets of non-Lebesgue points provided by this condition are of $q-$capacity zero for all $1<q<p$, whereas in the actual Lebesgue point property, these sets are slightly smaller, {\it i.e.,} of $p-$capacity zero.  We closely follow the strategy used by Latvala in \cite{Strong}. To achieve our goals, we first establish several results concerning capacities, Sobolev functions, and their upper gradients in the setting of metric measure spaces. Our argument relies on several tools, such as fine topologies, capacities, density functions in geodesic metric spaces, etc. Additionally, we employ capacities corresponding to two different notions of Sobolev spaces on metric measure spaces. If $\mu$ is a finite measure,  we get a stronger integral condition, which is new even in Euclidean space.
%our proof leads us to an integral condition that significantly improves to an integral condition that appears new even in the Euclidean setting.

\section{Preliminaries}
 In this paper, $X=(X,d,\mu)$ denotes a complete metric space equipped with a metric $d$ and a positive Borel regular measure $\mu$ such that $\mu(B) \in (0, \infty)$ for every (open) ball $B:=B(x,r) \subset X$, centered at some point $x \in X$ and radius $r>0$. Throughout, we assume that $\mu$ is a doubling measure, {\it i.e.,} there exists a constant $C_{\mu}>0$ such that for every ball $B \subset X$, $\mu(2B) \leq C_{\mu} \mu(B)$. The doubling property of $X$ ensures that it is separable \cite[p. 107]{Book}.
 %and proper {\it i.e.,} closed and bounded subsets of $X$ are compact with respect to the metric topology \cite[Proposition 3.1]{MR2867756}.
 
 Suppose that $B(x,R)$ is a ball in  $X$, $y \in B(x,R)$ and $0<r \leq R< \infty$. If there exist $Q>0$ and $C>0$ such that $$\frac{\mu(B(y,r))}{\mu(B(x,R))} \geq C\left( \frac{r}{R} \right)^Q,$$
 then we say that $X$ has relative lower volume decay of order $Q$ than $X$. It is easy to see that the doubling property of measure $\mu$ implies $X$ always has relative lower volume decay of some order  $Q \leq \log_2 \left(C_{\mu}\right)$ \cite[Lemma 3.3]{MR2867756}. The exponent $Q$ serves as a counterpart for the dimension of the space $X$. In particular, if $X$ is $n-$dimensional Euclidean space and $\mu$ is Lebesgue measure, then $Q=n$. Hereafter, $Q$ always refers to the order in the relative lower volume decay condition.

Let $1<p<\infty$ be fixed, unless otherwise specified. A measurable function  $u:X \to \mathbb{R}$ is said to be in $L^p(X)$ if $$\left\Vert u \right\Vert_{L^p(X)}:=\left(\int_X |u|^p d \mu \right)^{\frac{1}{p}}< \infty.$$
Functions in $L^p(X)$ are also referred to as $p-$integrable functions on $X$. We say $u \in L^p_{loc}(X)$ if $u \in L^p(A)$ for every compact subset $A \subset X$. If $u \in L^1_{loc}(X)$ and $A \subset X$ is measurable with  $0<\mu(A)< \infty$, then its integral average on A is denoted by $u_A$, {\it i.e.,}
$$u_A:=\fint_A u ~d\mu:=\frac{1}{\mu(A)}\int_A u ~d\mu.$$
Let $\alpha \in [0, \infty)$ and $R>0$. The centered fractional maximal function of $f\in L^1_{loc}(X)$ is defined by
$$\mathcal{M}_{\alpha,R}f(x):=\underset{r \in (0,R)}{\sup}r^{\alpha}\fint_{B(x,r)}|f|~d\mu.$$ If $\alpha=0$ and $R=\infty$, this coincides with the centered Hardy Littlewood maximal function.

There are several different ways to define Sobolev spaces on metric measure spaces. Here, we recall two commonly employed notions. This is because certain results pertinent to our discussion are available for one definition but not the other. However, we will later observe that these two notions are equivalent under our assumptions.

\subsection{Newton-Sobolev Spaces}

A curve of finite of length is called a rectifiable curve. We will only consider those curves which are non-constant, compact and rectifiable. Such a curve can be parameterized  by arc length $ds$. We say a property holds for $p-$almost every curve if it fails only for a curve family $\mathcal{C}$ with zero $p-$modulus, {\it i.e.,}  there exists a non-negative Borel $\rho \in L^p(X)$ such that $\int_{\gamma} \rho ~ds=\infty$ for every $\gamma \in \mathcal{C}$ \cite[Lemma 5.2.8]{Book}.
 
 The following notion of upper gradients due to Heinonen and Koskela \cite{Heinonen19981} serves as a substitute of the modulus of the usual gradient for functions defined in metric measure spaces. 

\begin{definition}
    A non-negative Borel function $g$ on $X$ is said to be an upper gradient of a real-valued function $u$ on $X$ if for all curves $\gamma:[0, l] \to X$,
\begin{equation}\label{Upper-gradient}
    |u(\gamma(0))-u(\gamma(l))| \leq \int_{\gamma}gds.
\end{equation}
\end{definition}
If (\ref{Upper-gradient}) holds for $p-$almost every curve, then $g$  is called a $p-$weak upper gradient of $u$.
 If $u$ has a $p-$weak upper gradient in $L^p_{loc}(X)$, then it has a minimal $p-$weak upper gradient therein. That is, there exists $g_u \in L^p_{loc}(X)$ satisfying $g_u \leq g$ {\it a.e.} for every $p-$weak upper gradient $g \in L^p_{loc}(X)$ of $u$  (see \cite[Theorem 7.16]{MR2039955}). 
 
 For any set $A \subset X$, we denote by $diam(A)$ the diameter of $A$.

 \begin{definition}
     The space $X$ is said to support a $p-$Poincar\'e inequality if for all balls $B \subset X$, all integrable functions $u$ on $X$ and all $p-$weak upper gradients $g$ of $u$, there exist constants $C>0$ and $\sigma \geq 1$ such that 
     $$\fint_B |u-u_B|d\mu \leq C~ diam(B) \left(\fint_{\sigma B} g^p d \mu \right)^{\frac{1}{p}},$$
     where $\sigma B$ is the ball having same centre as $B$ and radius $\sigma$ times that of $B$.
 \end{definition}
 We say $X$ is a $p-$Poincar\'e space if it supports a $p-$Poincar\'e inequality. Throughout the paper, we assume that $X$ is a $p-$Poincar\'e space. Let $\Tilde{N}^{1,p}(X)$ be the collection of all $p-$integrable functions $u$ on $X$ having a $p-$integrable upper gradient on $X$. We endow $\Tilde{N}^{1,p}(X)$ with the functional 
$$\left \Vert u \right \Vert_{\tilde{N}^{1,p}(X)}=\left( \int_X |u|^p~ d\mu + \underset{g}{\inf} \int_X g^p ~d\mu \right)^{\frac{1}{p}},$$ where the infimum is taken over all upper gradients $g$ of $u$.
% \begin{definition}   
%Let $\Tilde{N}^{1,p}(X)$ be the collection of all $p-$integrable functions $u \in X$ having a %$p-$integrable upper gradient on $X$. We endow $\Tilde{N}^{1,p}(X)$ with the functional 
%$$\left \Vert u \right \Vert_{\tilde{N}^{1,p}(X)}=\left( \int_X |u|^p~ d\mu + \underset{g}{\inf} %\int_X g^p ~d\mu \right)^{\frac{1}{p}},$$ where the infimum is taken over all upper gradients $g$ %of $u$.
%\end{definition}
It is important to point out that functions in $\Tilde{N}^{1,p}(X)$ are assumed to be defined everywhere and not just up to an equivalence class in the corresponding function space. The functional $\left \Vert \cdot \right \Vert_{\tilde{N}^{1,p}(X)}$ is only a seminorm. The Newton-Sobolev space $N^{1,p}(X)$ is obtained by passing to equivalence classes of functions in $\Tilde{N}^{1,p}(X)$, {\it i.e.,} 

 $$N^{1,p}(X):= \Tilde{N}^{1,p}(X)/{\sim},$$ where $u \sim v$ if and only if $\left\Vert u-v \right\Vert_{\Tilde{N}^{1,p}(X)}=0$.
We write $\left \Vert u \right \Vert_{N^{1,p}(X)}$ for the norm (quotient) of $u \in N^{1,p}(X)$. The space $N^{1,p}(X)$  with this norm is a Banach space. For a measurable set $E \subset X$, the space $\Tilde{N}^{1,p}(E)$ is defined by considering $(E,d|_E,\mu|_E)$ as a  metric measure space in its own right. 
 \begin{definition}

 The $p-$capacity of $E \subset X$ with respect to the space $N^{1,p}(X)$ is defined by 
 $$C_p(E):=\inf \left\Vert u \right\Vert^p_{N^{1,p}(X)},$$
 where the infimum is taken over all functions $u$ in $N^{1,p}(X)$ such that $u|_E \geq 1$.
      
 \end{definition}
 A property holds $p-$quasieverywhere ($p-$q.e.) if it holds everywhere except possibly on a set of $p-$capacity zero. The  Newton-Sobolev space with zero boundary values $N^{1,p}_0(E)$ is defined as the set of all functions $u \in N^{1,p}(X)$ for which $C_p(\{x \in X \setminus E~:~u(x) \neq 0 \})=0$ and is equipped with the norm 
 $$\left \Vert u \right \Vert_{N^{1,p}_0(E)}=\left \Vert u \right \Vert_{N^{1,p}(X)}.$$
 The space $N^{1,p}_0(E)$ with this norm is a Banach space.

 \begin{definition}
     Assume that $\Omega \subset X$ is bounded. The variational (relative) capacity of a set $E \subset \Omega$ is defined by
     $$cap_p(E,\Omega)=\underset{u}{\inf} \int_{\Omega} g^p_u d \mu,$$
     where the infimum is taken over all $u \in N^{1,p}_0(\Omega)$ such that $u \geq 1$ on E. If no such function $u$ exists, then $cap_p(E,\Omega)$ is  taken to be $\infty$.
 \end{definition}
 For more details on these topics, we refer our reader to the books \cite{MR2867756} and \cite{Book}.
 The following lemma \cite[Proposition 6.16]{MR2867756} relates the capacities $C_p$ and $cap_p$, and the measure $\mu$.
 \begin{lemma} \label{CapVSmeasure}
     Let $E \subset B=B(x,r)$ with $0<r< \frac{1}{8}diam(X)$. Then there exists a positive constant $C$ such that
     $$\frac{\mu(E)}{Cr^p} \leq cap_p(E,2B) \leq \frac{C_{\mu}\mu(B)}{r^p}$$ and
     $$\frac{C_p(E)}{C(1+r^p)}\leq cap_p(E,2B) \leq 2^p\left(1+\frac{1}{r^p} \right)C_p(E).$$
 \end{lemma}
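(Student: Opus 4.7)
The plan is to prove the four inequalities separately. The two ``easy'' directions are produced by exhibiting explicit admissible test functions, while the two ``hard'' directions both rest on a Sobolev--Poincar\'e (Friedrichs-type) inequality for zero-boundary Newton--Sobolev functions on a ball, which I would either cite or derive by combining the $p$-Poincar\'e inequality on a slightly enlarged ball with the observation that a function in $N^{1,p}_0(2B)$ vanishes $p$-quasi\-everywhere outside $2B$.

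For the upper bound $cap_p(E,2B)\le C_\mu\mu(B)/r^p$, first I would take the Lipschitz cutoff
\[
 \eta(y)=\max\{0,\min\{1,2-d(y,x)/r\}\},
\]
which is $1$ on $B$, vanishes off $2B$, and admits $g=r^{-1}\chi_{2B}$ as an upper gradient; since $\eta\in N^{1,p}_0(2B)$ and $\eta\ge 1$ on $E$, we get $cap_p(E,2B)\le\int_{2B}g^p\,d\mu\le \mu(2B)/r^p\le C_\mu\mu(B)/r^p$. For the reverse direction $\mu(E)/(Cr^p)\le cap_p(E,2B)$, I would invoke the Friedrichs inequality
\[
 \int_{2B}|u|^p\,d\mu\le Cr^p\int_{2B}g_u^p\,d\mu\qquad\text{for all } u\in N^{1,p}_0(2B),
\]
and apply it to an arbitrary competitor $u\ge 1$ on $E$ to obtain $\mu(E)\le\int_{2B}|u|^p\,d\mu\le Cr^p\int g_u^p\,d\mu$; taking the infimum gives the claim.

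For the comparison with the global capacity, the inequality $cap_p(E,2B)\le 2^p(1+r^{-p})C_p(E)$ follows by multiplying a $C_p$-admissible $u\in N^{1,p}(X)$ by the cutoff $\eta$ above: the product $v=u\eta$ lies in $N^{1,p}_0(2B)$, satisfies $v\ge 1$ on $E\subset B$, and by the Leibniz-type bound for minimal upper gradients $g_v\le |u|g_\eta+\eta g_u\le |u|/r+g_u$, so $\int g_v^p\,d\mu\le 2^{p-1}(r^{-p}\|u\|_{L^p}^p+\|g_u\|_{L^p}^p)\le 2^{p-1}(1+r^{-p})\|u\|_{N^{1,p}(X)}^p$; taking the infimum over $u$ yields the stated constant (up to the harmless factor of two). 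In the opposite direction, for $C_p(E)\le C(1+r^p)\,cap_p(E,2B)$, I would take a $cap_p$-admissible $u\in N^{1,p}_0(2B)$, extend it by zero to all of $X$ (this extension remains in $N^{1,p}(X)$ with the same minimal upper gradient supported in $2B$), and then use the Friedrichs inequality once more to estimate $\|u\|_{N^{1,p}(X)}^p=\int_{2B}|u|^p\,d\mu+\int_{2B}g_u^p\,d\mu\le(1+Cr^p)\int_{2B}g_u^p\,d\mu$; infimum gives $C_p(E)\le (1+Cr^p)\,cap_p(E,2B)$.

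The principal obstacle is the Friedrichs inequality for $N^{1,p}_0(2B)$, which is used in two of the four bounds. One must justify that a Newton--Sobolev function vanishing $p$-quasi\-everywhere on the complement of $2B$ may be treated, for the purposes of the $p$-Poincar\'e inequality applied on $3B$, as a genuine zero-extended function whose minimal $p$-weak upper gradient is the original $g_u$ extended by zero. This uses the fact that $p$-almost every rectifiable curve avoids sets of zero $p$-capacity, so the upper-gradient inequality is preserved. The hypothesis $r<\frac{1}{8}\text{diam}(X)$ enters here, ensuring that the enlarged balls on which the Poincar\'e inequality is invoked are proper subsets of $X$ and that the doubling measure controls the ratios $\mu(2B)/\mu(3B)$ uniformly; once these technicalities are settled, the remaining estimates are routine bookkeeping with the doubling constant $C_\mu$ and the Poincar\'e dilation $\sigma$.
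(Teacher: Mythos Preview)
The paper does not prove this lemma; it is quoted verbatim as \cite[Proposition~6.16]{MR2867756} (Bj\"orn--Bj\"orn), so there is no in-paper argument to compare against. Your sketch is essentially the standard proof one finds in that reference: explicit Lipschitz cutoffs give the two upper bounds, and a Friedrichs-type inequality $\int_{2B}|u|^p\,d\mu\le Cr^p\int_{2B}g_u^p\,d\mu$ for $u\in N^{1,p}_0(2B)$ yields the two lower bounds. The Leibniz-rule step and the zero-extension step are handled in the cited book exactly as you describe.

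The one place where your outline is a bit thin is the derivation of the Friedrichs inequality itself. Applying the $p$-Poincar\'e inequality on a larger ball (say $\lambda B$ with $\lambda>2$) to the zero extension of $u$ only gives control of $u-u_{\lambda B}$; to kill the constant $u_{\lambda B}$ you need $\mu(\lambda B\setminus 2B)\ge c\,\mu(\lambda B)$ for some $c>0$ independent of $B$. This is where the hypothesis $r<\tfrac18\operatorname{diam}(X)$ genuinely enters: together with the fact that a complete doubling $p$-Poincar\'e space is quasiconvex (hence connected), it guarantees a point $y\in\lambda B$ with $d(x,y)$ between $2r$ and $\lambda r$, and then doubling gives $\mu(B(y,r'))\gtrsim\mu(\lambda B)$ for a small ball $B(y,r')\subset\lambda B\setminus 2B$. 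Your remark that the radius restriction ``ensures the enlarged balls are proper subsets of $X$'' is correct but does not by itself give this annulus estimate; once you add that step (or cite the corresponding lemma in Bj\"orn--Bj\"orn), the argument is complete.
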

\subsection{Haj{\l}asz-Sobolev Spaces}
 Haj{\l}asz, in \cite{MR1401074}, introduced an alternative approach to defining Sobolev spaces on metric measure spaces. This method is rooted in his pointwise characterization of Sobolev spaces defined on $\mathbb{R}^n$.
 Let $\mathcal{D}(u)$ be the set of all measurable functions  $g:X \to [0,\infty]$ such that 
 $$|u(x)-u(y)| \leq d(x,y) \left(g(x)+g(y) \right)$$
 for every $x,y \in X\setminus A$,  for some $A \subset X$ with $\mu(A)=0$. A function $g \in \mathcal{D}(u) \cap L^p(X)$ is called  a Haj{\l}asz gradient of $u$. 
 \begin{definition} 

 The space $M^{1,p}(X)$ consists of all functions $u\in L^p(X)$ for which there exists $g \in \mathcal{D}(u) \cap L^p(X)$. The space $M^{1,p}$ is equipped with norm
 $$\left \Vert u \right \Vert_{M^{1,p}(X)}= \left \Vert u \right \Vert_{L^p(X)}+ \underset{g}{\inf} \left \Vert g \right \Vert_{L^p(X)},$$
 where the infimum is taken over all Haj{\l}asz gradients $g$ of $u$.
     
 \end{definition}
 The space $M^{1,p}$ with the above norm is a Banach space. This space is known as Haj{\l}asz-Sobolev space. There is a natural capacity in Haj{\l}asz-Sobolev spaces. 
\begin{definition}
 The $p-$capacity of a set $E \subset X$ with respect to the space $M^{1,p}(X)$ is defined by
 $$C^{\prime}_p(E):=\underset{u \in \mathcal{A}(E)}{\inf} \left \Vert u \right \Vert^p_{M^{1,p}(X)},$$
 where $\mathcal{A}(E):=\{u \in M^{1,p}(X)~:~u \geq 1~\text{on an open neighbourhood of E}\}$.
 \end{definition}
  It must be noted that the capacities $C_p$ and $C^{\prime}_p$ are outer measures.  Since $X$ is a complete doubling metric measure space supporting a $p-$Poincar\'e inequality for $1<p< \infty$, it follows from \cite[Theorem 4.9]{Relation} and \cite[Theorem 12.3.9]{Book} that $M^{1,p}(X)$ and $N^{1,p}(X)$ are equal as sets. Moreover, their norms are equivalent (see \cite[Theorem 12.3.14]{Book}). Therefore, we do not make any distinction between them.
 %, {\it i.e.,} $\left \Vert u \right \Vert_{N^{1,p}(X)} \leq 3 \left \Vert u \right \Vert_{M^{1,p}(X)}$ (\cite[Theorem 10.2.8]{Book}). Thus, by the application of the open mapping theorem, the two norms are equivalent on $M^{1,p}(X)=N^{1,p}(X)$. Therefore, we do not make any distinction between them.

 \subsection{$p-$fine Topology}
 Assume that $X$ contains at least two distinct points. We recall some basics on fine topology associated with the space $N^{1,p}(X)$.
 \begin{definition}
    A set $A \subset X$ is said to be $p-$thin at $x \in X$  if there exists some $0<\delta<\infty$ such that
     $$\int_0^{\delta} \left(\frac{cap_p\left(A \cap B(x,r),B(x,2r)\right)}{cap_p\left(B(x,r),B(x,2r)\right)} \right)^{\frac{1}{p-1}}\frac{dr}{r}< \infty.$$
     A set $O \subset X$ is called $p-$finely open if $X\setminus O$ is $p-$thin at every $x \in O$.
     \end{definition}
     The integral in the above definition is termed as the Wiener integral. It appears  in the Weiner test for identification of regular boundary points \cite{Wiener}. Here, we use the convention that the integrand is $1$ if $cap_p\left(B(x,r),B(x,2r)\right)=0$. It is easy to verify that the collection of all $p-$finely open sets is a topology on $X$,  which we call the $p-$fine topology. A function $u:O \to \mathbb{R}$, defined on a $p-$finely open set $O$ is said to be $p-$finely continuous if it is continuous when $O$ is endowed with $p-$fine topology and $\mathbb{R}$ is endowed with the usual topology. For more details, refer to \cite[section 11.6]{MR2867756} and \cite{MR2373067}.
     
     We denote by $\Bar{E^p}$ the $p-$fine closure of a set $E \subset X$. The following result appears as first part of Lemma 4.8 in \cite{MR3361291}.
 %\begin{definition}
    % A point $x \in X$ is $p-$fine limit point of $E \subset X$ if and only if whenever $O$ is a $p-$fine open neighbourhood of $x$, then $\left(O\setminus \{x\} \right) \cap E \neq \emptyset$.
 %\end{definition}
 %Let $\Tilde{E^p}$ be the set of all $p-$fine limit points of $E \subset X$. Then clearly $\bar{E^p}=E \cup \Tilde{E^p}$.
 
 \begin{lemma} \label{closure}
     Let $E \subset X$ be an arbitrary set. Then $C_p\left(\bar{E^p}\right)=C_p\left(E\right)$.
 \end{lemma}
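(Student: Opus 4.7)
The inclusion $E\subseteq \bar{E^p}$ combined with the fact that $C_p$ is an outer measure immediately yields $C_p(E)\le C_p(\bar{E^p})$, so all the content of the lemma is in the reverse inequality $C_p(\bar{E^p})\le C_p(E)$. I would prove this by pushing an arbitrary admissible function for $E$ forward to an admissible function for $\bar{E^p}$, using $p$-fine continuity of Newton-Sobolev functions as the bridge.

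\textbf{Main tool.} The key ingredient is that every $u\in N^{1,p}(X)$ has a representative which is $p$-finely continuous at every point outside some exceptional set $N$ with $C_p(N)=0$. In the present metric setting this is available through the quasicontinuity of Sobolev functions combined with the Kellogg/Choquet-type characterisation of $p$-thin sets via the Wiener integral (as recorded, e.g., in \cite[\S11.6]{MR2867756} and \cite{MR2373067}), all of which the paper explicitly cites.

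\textbf{Key argument.} Fix $u\in N^{1,p}(X)$ admissible for $E$, i.e., $u|_E\ge 1$. Replacing $u$ by the truncation $\min(\max(u,0),1)$ does not increase $\|u\|_{N^{1,p}(X)}$ and preserves admissibility, so I may assume $0\le u\le 1$ everywhere and $u=1$ on $E$. Let $N$ be the $p$-capacity-zero set off which $u$ is $p$-finely continuous. For any $x\in\bar{E^p}\setminus N$, if $u(x)<1-\varepsilon$ for some $\varepsilon>0$, then $\{y: u(y)<1-\varepsilon\}$ is a $p$-fine neighbourhood of $x$ by $p$-fine continuity; since $x\in\bar{E^p}$, this neighbourhood must meet $E$, contradicting $u|_E\ge 1$. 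Hence $u\ge 1$ on $\bar{E^p}\setminus N$.

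\textbf{Perturbing on the null set and concluding.} Since $C_p(N)=0$, for each $\varepsilon>0$ there exists $v_\varepsilon\in N^{1,p}(X)$ with $v_\varepsilon\ge 1$ on $N$ and $\|v_\varepsilon\|_{N^{1,p}(X)}^p<\varepsilon$ (and I may take $v_\varepsilon\ge 0$ by truncation). Then $u+v_\varepsilon\ge 1$ everywhere on $\bar{E^p}$, so
$$C_p(\bar{E^p})\le \|u+v_\varepsilon\|_{N^{1,p}(X)}^p \le \bigl(\|u\|_{N^{1,p}(X)}+\varepsilon^{1/p}\bigr)^p.$$
Sending $\varepsilon\to 0$ and then taking the infimum over all admissible $u$ for $E$ gives $C_p(\bar{E^p})\le C_p(E)$. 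The only nontrivial step is the invocation of $p$-fine continuity of $N^{1,p}$-functions off a capacity-zero set; once this is granted, the rest is a short manipulation of admissible functions and outer-measure bookkeeping.
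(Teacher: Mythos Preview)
The paper does not supply its own proof of this lemma; it simply records the result as the first part of Lemma~4.8 in \cite{MR3361291}. Your argument is correct and is the natural route: monotonicity gives $C_p(E)\le C_p(\bar{E^p})$, and for the reverse you correctly invoke the $p$-quasieverywhere $p$-fine continuity of $N^{1,p}$-functions (available in the present setting via \cite{MR2373067} and \cite[\S11.6]{MR2867756}), which forces any truncated admissible function for $E$ to equal $1$ on $\bar{E^p}$ outside a $C_p$-null set, after which the $\varepsilon$-perturbation cleanly yields $C_p(\bar{E^p})\le C_p(E)$.
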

 %\begin{proof}

 %{\color{red} I have made assumption that capaciary extremel is solution to obstacle problem, which is not proved anywhere for metric case.}
     %The inequality in one direction is trivial {\it i.e.} $C_p\left(E\right) \leq C_p\left(\bar{E^p}\right)$. For the opposite direction, assume that $C_p\left(E\right)< \infty$. Given $\epsilon >0$, find an open set $O \supset E$ such that $C_p\left(O\right)<C_p\left(E\right)+ \epsilon$. Let $\Tilde{u}$ be a capacitary extremel for $C_p\left(O\right)$ {\it i.e.} a lower semicontinuously regularized solution of $\mathcal{K}_{\chi_{E},0}(O)-$problem (see \cite{MR2373067}). Then $\Tilde{u}$ is $p-$superharmonic. Therefore, by \cite[Theorem 4.4]{MR2373067}, it is $p-$finely continuous in $O$ and hence $\Tilde{u}\geq 1$ on $\bar{O^p}$. This gives
    % $$C_p\left(\bar{O^p}\right) =\inf \left\Vert u\right\Vert^p_{N^{1,p}(X)}\leq \left\Vert \Tilde{u}\right\Vert^p_{N^{1,p}(X)}= C_p\left(O \right),$$
    % where the infimum is taken over all $u \in N^{1,p}(X)$ such that $u|_{\bar{O^p}} \geq 1$. Thus
    % $$C_p\left(\bar{E^p}\right) \leq C_p\left(\bar{O^p}\right)\leq C_p\left( O \right) \leq C_p\left(E\right)+ \epsilon.$$
% \end{proof}

 \section{Required Lemmas}

In this section, we establish a few technical results which will be used in the sequel. 
%for subsequent developments. 
%Additionally, to streamline notation, 
For this, we adopt the convention $a \lesssim b$ (or equivalently $b \gtrsim a$) to indicate the existence of a positive constant $C$, which may change from one occurrence to another, such that $a \leq C~b$.

 %In this section we prove several lemmas that are required in the sequel. Moreover, if it is not important to keep track of the constants, we use the notation $a \lesssim b$ (or equivalently $b \gtrsim a$) to signify that there is a positive constant $C$, which may change from one occurrence to another, such that $a \leq C~b$. 
 
 Let $r>0$ and $g \in L^p_{loc}(X)$ be a non-negative function. Consider the Riesz Potential
 $$J_{p,r}g(x):=\sum_{k=0}^{\infty} 2^{-k}r \left(\fint_{B(x,2^{-k}r)}g^p d \mu \right)^{\frac{1}{p}}.$$
 %\begin{lemma} $\cite{}$  \label{Lemma1}
    % Assume that $\mu$ is doubling on $X$ and $1<p<\infty$. Then there is a constant $C>0$ such that
         %$$\int_X \left(J_{p,r}g \right)^p d \mu \leq C~r^p\int_X g^p d \mu$$
     %for any $r>0$.
% \end{lemma}

 \begin{lemma}\label{Lemma2}
 Let $0<\delta<\infty$ and $g \in L^p(X)$  be a non-negative function. Then
 $$\int_X \left(\int_{0}^{\delta}\left(r^p\fint_{B(x,r)}g^pd \mu \right)^{\frac{1}{p-1}}\frac{dr}{r}\right)^{p-1}d \mu< \infty.$$
    
 \end{lemma}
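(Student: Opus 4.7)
My plan is to discretize the radial integral dyadically and then reduce the resulting sum to $\|g\|_{L^p(X)}^p$ by a standard Fubini calculation.

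First I would split $(0,\delta]=\bigcup_{k\ge 0} I_k$ with $I_k=(2^{-(k+1)}\delta,2^{-k}\delta]$ and write $s_k=2^{-k}\delta$. On $I_k$ the doubling property together with the inclusions $B(x,s_{k+1})\subset B(x,r)\subset B(x,s_k)$ yields $\fint_{B(x,r)}g^p\,d\mu\lesssim \fint_{B(x,s_k)}g^p\,d\mu$; combined with $r\le s_k$ and $\int_{I_k}\frac{dr}{r}=\log 2$ this produces the pointwise bound
$$ I(x):=\int_0^\delta\!\left(r^p\fint_{B(x,r)}g^p\,d\mu\right)^{\!1/(p-1)}\!\frac{dr}{r}\lesssim \sum_{k=0}^\infty b_k(x),\qquad b_k(x):=s_k^{p/(p-1)}\!\left(\fint_{B(x,s_k)}g^p\,d\mu\right)^{\!1/(p-1)}\!.$$

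Next I would perform the Fubini step. Swapping the order of integration and using $\mu(B(x,r))\gtrsim\mu(B(y,r))$ whenever $d(x,y)<r$ (a direct consequence of doubling) gives the familiar estimate
$$\int_X\fint_{B(x,r)}g(y)^p\,d\mu(y)\,d\mu(x)\lesssim \int_X g^p\,d\mu,$$
and therefore $\int_X b_k^{p-1}\,d\mu\lesssim s_k^p\|g\|_{L^p(X)}^p$ for every $k$.

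Finally the dyadic pieces must be combined in $L^{p-1}$. If $p\ge 2$, then $p-1\ge 1$, and Minkowski's inequality gives
$$\left\|\sum_k b_k\right\|_{L^{p-1}(X)}\le \sum_k\|b_k\|_{L^{p-1}(X)}\lesssim \|g\|_{L^p(X)}^{p/(p-1)}\sum_k s_k^{p/(p-1)}<\infty.$$
If $1<p<2$, then $p-1<1$ and the elementary inequality $(\sum_k t_k)^{p-1}\le\sum_k t_k^{p-1}$ (valid for nonnegative $t_k$) lets the power pass through the sum, so
$$\int_X\left(\sum_k b_k\right)^{\!p-1}\,d\mu\le \sum_k\int_X b_k^{p-1}\,d\mu\lesssim \|g\|_{L^p(X)}^p\sum_k s_k^p<\infty.$$
The only mildly delicate point, and what forces the case split, is this last $L^{p-1}$ step, since $L^{p-1}$ is a genuine Banach space only for $p\ge 2$; the discretization and the Fubini estimate are routine consequences of doubling.
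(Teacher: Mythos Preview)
Your argument is correct, but it takes a different route from the paper after the common discretization step. The paper also arrives at $I_\delta(x)\lesssim\sum_k b_k(x)$ with the same $b_k$, but then, instead of estimating $\int_X(\sum_k b_k)^{p-1}\,d\mu$ directly, it uses the elementary inequality $\sum_k a_k^{\theta}\le(\sum_k a_k)^{\theta}$ for $\theta=p/(p-1)\ge 1$ to obtain the pointwise bound $I_\delta(x)\lesssim (J_{p,\delta}g(x))^{p/(p-1)}$, where $J_{p,\delta}$ is the discrete Riesz potential defined just before the lemma. It then raises to the $(p-1)$th power and invokes the known $L^p$ bound $\int_X (J_{p,\delta}g)^p\,d\mu\lesssim\int_X g^p\,d\mu$ from \cite[Lemma~10.4.8]{Book}. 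This has the advantage of a single argument for all $1<p<\infty$, at the cost of quoting an external $L^p$ inequality for the Riesz potential. Your approach, by contrast, is completely self-contained---the Fubini/doubling estimate for $\int_X b_k^{p-1}\,d\mu$ is exactly the calculation that underlies that Riesz potential bound anyway---but the summation in $L^{p-1}$ forces the case split at $p=2$. Both are equally rigorous; yours is arguably more elementary, while the paper's is more streamlined.
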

 \begin{proof}
    For a fixed $x \in X$, setting $I_{\delta}(x):=\int_{0}^{\delta}\left(r^p\fint_{B(x,r)}g^pd \mu \right)^{\frac{1}{p-1}}\frac{dr}{r}$ and discretizing, we get
     
     \begin{eqnarray*}
         I_{\delta}(x) & = & \sum_{k=0}^{\infty} \int_{2^{-k-1}\delta}^{2^{-k} \delta}\left(r^p\fint_{B(x,r)}g^pd \mu \right)^{\frac{1}{p-1}}\frac{dr}{r} \\ & \lesssim & \sum_{k=0}^{\infty} \left( (2^{-k} \delta)^p \frac{1}{\mu(B(x,2^{-k-1}\delta))}\int_{B(x,2^{-k}\delta)}g^p d \mu\right)^{\frac{1}{p-1}}.
     \end{eqnarray*}
    Since $\mu$ is doubling,
    
    $$I_{\delta}(x) \lesssim \sum_{k=0}^{\infty} \left( 2^{-k} \delta \left(\fint_{B(x,2^{-k}\delta)}g^p d \mu \right)^{\frac{1}{p}}\right)^{\frac{p}{p-1}}.$$

   Now using the basic estimate $\sum |a_k|^{\theta} \leq \left( \sum |a_k|\right)^{\theta}$ for $\theta \geq 1$, we obtain
    $$I_{\delta}(x) \lesssim \left( J_{p, \delta}g(x) \right)^{\frac{p}{p-1}}.$$

    Therefore, by \cite[Lemma 10.4.8]{Book}, 
    \begin{eqnarray*}
     \int_X \left(I_{\delta}(x)\right)^{p-1} d \mu & \lesssim & \int_X \left( J_{p, \delta}g \right)^p d \mu \\ & \lesssim & \int_X g^p d \mu \\ & < & \infty.
    \end{eqnarray*}
    This completes the proof.
     
 \end{proof}
 Every function $u \in N^{1,p}(X)$ possesses a minimal $p-$weak upper gradient in $g_u \in L^p(X)$ which is unique up to sets of measure zero \cite[Theorem 2.5]{MR2867756}. Thus, as a consequence, we have the following:
 \begin{corollary} \label{cor3.2}
     Let $0<\delta<\infty$ and $u \in N^{1,p}(X)$. Then 
     $$\int_{0}^{\delta}\left(r^p\fint_{B(x,r)}g^p_ud \mu \right)^{\frac{1}{p-1}}\frac{dr}{r}< \infty~~~\text{for}~\mu-a.e. ~ x \in X.$$
 \end{corollary}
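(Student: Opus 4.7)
The plan is that Corollary \ref{cor3.2} should follow almost immediately from Lemma \ref{Lemma2} applied to the minimal $p$-weak upper gradient of $u$. I would carry out the argument as follows.

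First, I would invoke the result cited just before the corollary: since $u \in N^{1,p}(X)$, there exists a minimal $p$-weak upper gradient $g_u \in L^p(X)$, unique up to a set of measure zero. In particular, $g_u$ is a non-negative Borel-representable function lying in $L^p(X)$, so it satisfies the hypothesis of Lemma \ref{Lemma2}.

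Next, I would apply Lemma \ref{Lemma2} with $g = g_u$ to conclude that
$$\int_X \left( \int_{0}^{\delta}\left(r^p \fint_{B(x,r)} g_u^p \, d\mu \right)^{\frac{1}{p-1}} \frac{dr}{r}\right)^{p-1} d\mu(x) < \infty.$$
Because the integrand on the left is non-negative and its integral over $X$ is finite, the integrand itself must be finite for $\mu$-almost every $x \in X$. Taking $(p-1)$-th roots (which preserves finiteness since $p-1 > 0$) yields the desired pointwise conclusion
$$\int_{0}^{\delta}\left(r^p \fint_{B(x,r)} g_u^p \, d\mu \right)^{\frac{1}{p-1}} \frac{dr}{r} < \infty \quad \text{for } \mu\text{-a.e. } x \in X.$$

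There is essentially no obstacle here: the corollary is a direct translation of Lemma \ref{Lemma2} into the Sobolev-function setting, and the only content is recognizing that $g_u \in L^p(X)$ qualifies as an admissible $g$ in that lemma. The one small care point is that the minimal $p$-weak upper gradient is only defined up to a set of measure zero, but altering $g_u$ on a null set does not affect any of the averages $\fint_{B(x,r)} g_u^p \, d\mu$, so the statement is unambiguous.
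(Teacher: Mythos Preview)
Your proposal is correct and matches the paper's approach exactly: the paper simply remarks that since $g_u \in L^p(X)$, the corollary follows as an immediate consequence of Lemma~\ref{Lemma2}, which is precisely the argument you spell out. There is nothing to add.
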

In the preceding result, the exceptional set possesses measure zero. However, when $1<p<n$ and $X=\mathbb{R}^n$, it is known that the exceptional set has $p$-capacity zero \cite[Lemma 2.4]{Strong}. For the metric setting, we get the following partial result.
 \begin{lemma}\label{Capacity}
     Let $0<\delta<\infty$, $u \in N^{1,p}(X)$ and $$E:=\left\{ x \in X~:~\int_{0}^{\delta}\left(r^p\fint_{B(x,r)}g^p_ud \mu \right)^{\frac{1}{p-1}}\frac{dr}{r}= \infty \right\}.$$ Then for every $q \in (1,p)$, $C_q(E)=0$.
 \end{lemma}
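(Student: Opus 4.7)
The plan is to reduce $E$ to the divergence set of the Riesz-type potential $J_{p,\delta}g_u$ and then to show that this larger set has $q$-capacity zero. From the computation inside the proof of Lemma~\ref{Lemma2} we already have the pointwise bound
\[
I_\delta(x):=\int_0^\delta\Bigl(r^p\fint_{B(x,r)}g_u^p\,d\mu\Bigr)^{1/(p-1)}\frac{dr}{r}\ \lesssim\ \bigl(J_{p,\delta}g_u(x)\bigr)^{p/(p-1)},
\]
so $E\subset F:=\{x\in X:J_{p,\delta}g_u(x)=\infty\}$ and it suffices to prove $C_q(F)=0$. Since $X$ is separable and $C_q$ is an outer measure (hence countably subadditive), it is enough to establish $C_q(F\cap B)=0$ for every ball $B=B(x_0,R)$.

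Fix such a ball, let $B':=B(x_0,2R+\delta)$ and put $h:=g_u\chi_{B'}$. Whenever $x\in B$ and $r\in(0,\delta)$ one has $B(x,r)\subset B'$, so $J_{p,\delta}g_u\equiv J_{p,\delta}h$ on $B$; a short check shows that $v:=J_{p,\delta}h$ vanishes outside $B(x_0,2R+2\delta)$, because the balls appearing in the Riesz sum are then disjoint from $B'$. Since $h\in L^p(X)$, \cite[Lemma~10.4.8]{Book} gives $v\in L^p(X)$, and the bounded support upgrades this (via H\"older) to $v\in L^q(X)$ for every $q\in(1,p)$. For $t>0$ introduce the test function $w_t:=\min\{1,v/t\}$. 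It equals $1$ on $F\cap B\subset\{v=\infty\}$, so it is admissible for $C_q(F\cap B)$. Because $v<\infty$ $\mu$-almost everywhere and $w_t\leq\chi_{\operatorname{supp}(v)}\in L^q(X)$, dominated convergence yields $\|w_t\|_{L^q(X)}\to 0$ as $t\to\infty$.

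The remaining key point is to exhibit an $L^q$-integrable upper gradient of $v$. The discrete Riesz potential $v=J_{p,\delta}h$ admits a Haj\l asz-type gradient pointwise dominated by a constant multiple of a fractional maximal function of $h$; thanks to the Hardy--Littlewood maximal inequality (available because $q>1$) together with the bounded support of $h$, this bound lies in $L^q(X)$. Through the identification $M^{1,q}(X)=N^{1,q}(X)$ recorded in the preliminaries, this yields $v\in N^{1,q}(X)$. The standard chain rule for $\min\{1,\cdot\}$ then makes $t^{-1}$ times the upper gradient of $v$ an upper gradient of $w_t$, so $\|w_t\|_{N^{1,q}(X)}\to 0$. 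This gives $C_q(F\cap B)=0$, and a countable cover of $X$ by balls finishes the proof. The main obstacle is precisely the construction of this $L^q$ upper gradient: both $q>1$ (to invoke the Hardy--Littlewood theorem) and $q<p$ (to promote $v\in L^p$ to $v\in L^q$ globally through the localization step) are essential, which is exactly why the method degenerates at the endpoint $q=p$ and only a partial analogue of the Euclidean Meyers--Latvala result is available in the metric setting.
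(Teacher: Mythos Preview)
Your reduction to the divergence set of $J_{p,\delta}g_u$ and the localization to balls are fine, but the proof has a genuine gap at the step you yourself flag as ``the main obstacle'': you assert that $v=J_{p,\delta}h$ admits a Haj\l asz gradient dominated by some fractional maximal function of $h$ lying in $L^q(X)$, yet you neither identify which maximal function, nor prove the pointwise Haj\l asz inequality, nor justify the $L^q$ bound. This is not a routine fact in the metric setting. Estimating $|J_{p,\delta}h(x)-J_{p,\delta}h(y)|$ term by term runs into the difficulty that the individual terms $2^{-k}\delta\bigl(\fint_{B(\cdot,2^{-k}\delta)}h^p\bigr)^{1/p}$ are not obviously Lipschitz with summable constants, and the usual discrete-convolution arguments produce an upper gradient in $L^p$, not in $L^q$ for $q<p$. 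Without this construction the admissibility of $w_t$ in $N^{1,q}(X)$ is unproven and the argument collapses.

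The paper avoids this difficulty entirely by a much shorter route: split $r^p=r^{p-q}\cdot r^q$ inside the Wiener integrand, pull out the supremum over $r\in(0,\delta)$ of $r^q\fint_{B(x,r)}g_u^p$, and observe that the remaining integral $\int_0^\delta r^{(p-q)/(p-1)-1}\,dr$ converges precisely because $q<p$. This gives the pointwise bound
\[
\int_0^\delta\Bigl(r^p\fint_{B(x,r)}g_u^p\,d\mu\Bigr)^{1/(p-1)}\frac{dr}{r}\ \lesssim\ \bigl(\mathcal{M}_{q,\delta}g_u^p(x)\bigr)^{1/(p-1)},
\]
so $E\subset\{\mathcal{M}_{q,\delta}g_u^p=\infty\}$. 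The Haj\l asz--Kinnunen capacity estimate \cite[Lemma~3.15]{MR1681586} then bounds $C_q'$ of the superlevel sets $\{\mathcal{M}_{q,\delta}g_u^p>\lambda\}$ by $\lambda^{-1}\|g_u\|_{L^p}^p$, and letting $\lambda\to\infty$ gives $C_q'(E)=0$, hence $C_q(E)=0$. No upper gradient of a potential needs to be constructed; the fractional maximal function does all the work, and the role of the hypothesis $q<p$ is transparent in the convergence of $\int_0^\delta r^{(p-q)/(p-1)-1}\,dr$.
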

 \begin{proof} 
Without loss of generality, we take $\delta=1$. For any $x \in X$,
\begin{eqnarray*}
\int_{0}^{1}\left(r^p\fint_{B(x,r)}g^p_ud \mu \right)^{\frac{1}{p-1}}\frac{dr}{r} & = & \int_{0}^{1}r^{\frac{p-q}{p-1}}\left(r^q\fint_{B(x,r)}g^p_ud \mu \right)^{\frac{1}{p-1}}~\frac{dr}{r}
\\ & \leq & \sup_{0<r<1} \left(r^q\fint_{B(x,r)}g^p_ud \mu \right)^{\frac{1}{p-1}} \int_{0}^{1} r^{\frac{p-q}{p-1}-1} ~dr 
\\ & \leq & \frac{p-1}{p-q} \left( \mathcal{M}_{q,1}g^p_u(x)\right)^{\frac{1}{p-1}}.
\end{eqnarray*}
 Using monotonicity of the outer measure $C^{\prime}_q$ and applying \cite[Lemma 3.15]{MR1681586}, we get
\begin{eqnarray*}
 C^{\prime}_q(E) &  \leq &  \underset{N \to \infty}{\lim}C^{\prime}_q\left(\left\{ x \in X~:~ \left( \mathcal{M}_{q,1}g^p_u(x)\right)^{\frac{1}{p-1}}> \frac{p-q}{p-1}N \right\}\right) \\ & \lesssim & \underset{N \to \infty}{\lim}\frac{1}{N^{p-1}}\left\Vert g_u \right\Vert^p_{L^p(X)} \\ & = & 0.
\end{eqnarray*}
 From the definitions of the capacities, it follows that $ C_q(E) \leq C^{\prime}_q(E)$, and hence $ C_q(E)=0$.

 \end{proof}
For any function $u \in L^p_{loc}(X)$, we define $\Tilde{u}(x)$ pointwise for all $x \in X$ by the expression
 \begin{equation}\label{LD-eqn}
\Tilde{u}(x):= \underset{r \to 0}{\lim \sup}\fint_{B(x,r)}u~d\mu.    
 \end{equation}
 By the Lebesgue differentiation theorem, we get $\Tilde{u}(x) = u(x)$ $\mu-$a.e. on $X.$
% ensures that the limit superior exists and is equal to $u~ a.e.$ 
 In the following lemma (see \cite[Theorem 5.3, Remark 5.21]{MR1681586}), we identify $u$ with $\Tilde{u}$ and discard the tilde notation.

 \begin{lemma}  \label{Holder QC}
     Let $u \in N^{1,p}(X)$ be defined pointwise everywhere by (\ref{LD-eqn}). Also let $1<p \leq Q$ and $0<\beta<1-\frac{1}{p}$. Then for every $\epsilon>0$, there is an open set $O$ with $C_{(1-\beta)p}(O)<\epsilon$ and a function $v$ such that $u=v$ everywhere in $X \setminus O$, $v \in N^{1,p}(X)$ and is H\"older continuous with exponent $\beta$ on every bounded subset of $X$.
 \end{lemma}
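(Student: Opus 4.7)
The strategy is to combine a pointwise Hölder-type bound for $u$ in terms of a fractional maximal function of its minimal upper gradient $g_u$ with a weak-type capacitary estimate for that operator, and then produce $v$ by a McShane–Whitney extension off the resulting exceptional set.

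First I would derive the pointwise estimate. Iterating the $p$-Poincaré inequality on the dyadic balls $B_k = B(x, 2^{-k} r)$ and summing the telescoping chain gives, for every Lebesgue point $x$ of $u$,
\[
|u(x) - u_{B(x,r)}| \lesssim \sum_{k=0}^{\infty} 2^{-k} r \left( \fint_{\sigma B_k} g_u^p\, d\mu \right)^{1/p}.
\]
Splitting $2^{-k} r = (2^{-k} r)^{1-\beta} (2^{-k} r)^{\beta}$ and using $(2^{-k} r)^{(1-\beta)p} \fint_{\sigma B_k} g_u^p \, d\mu \leq \mathcal{M}_{(1-\beta)p,\, \sigma r} g_u^p(x)$, the geometric factor $\sum 2^{-k(1-\beta)}$ converges (since $\beta<1$), producing
\[
|u(x) - u_{B(x,r)}| \lesssim r^{\beta} \bigl( \mathcal{M}_{(1-\beta)p,\, \sigma r} g_u^p(x) \bigr)^{1/p}.
\]
Chaining two such bounds at Lebesgue points $x,y$ through a common ball of radius $\asymp d(x,y)$ yields, whenever $d(x,y) \leq R$,
\[
|u(x) - u(y)| \lesssim d(x,y)^{\beta} \Bigl( \mathcal{M}_{(1-\beta)p,\, R} g_u^p(x)^{1/p} + \mathcal{M}_{(1-\beta)p,\, R} g_u^p(y)^{1/p} \Bigr).
\]

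Next I would apply the capacitary weak-type inequality for the fractional maximal operator (in the spirit of \cite[Lemma 3.15]{MR1681586}, already exploited in the proof of Lemma \ref{Capacity}): for $E_\lambda := \{x : \mathcal{M}_{(1-\beta)p,\, R} g_u^p(x) > \lambda^p\}$ one obtains $C^{\prime}_{(1-\beta)p}(E_\lambda) \lesssim \lambda^{-p}\, \|g_u\|_{L^p(X)}^{p}$, with a similar bound controlling the set of non-Lebesgue points of $u$ (which has $(1-\beta)p$-capacity zero since $(1-\beta)p < p$). Given $\epsilon>0$, choose $\lambda$ so large and an open $O$ containing $E_\lambda$ together with the non-Lebesgue set that $C_{(1-\beta)p}(O) \leq C^{\prime}_{(1-\beta)p}(O) < \epsilon$. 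On $X \setminus O$ the estimate of the first step shows that $u$ is Hölder of exponent $\beta$, with constant depending on $\lambda$ and on a bounded region.

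Finally, I would construct $v$ by a localized McShane–Whitney extension
\[
v(x) := \inf_{y \in X \setminus O} \bigl( u(y) + L \, d(x,y)^{\beta} \bigr),
\]
with $L$ determined by the Hölder bound on each bounded set and patched via a partition of unity so that $v \in L^p(X)$; one then checks that $v$ admits a $p$-integrable upper gradient (essentially $g_u$ away from $O$, together with a controlled term supported in $O$), so $v \in N^{1,p}(X)$. The main obstacle is the capacitary weak-type inequality: transferring the Meyers-type argument, which in $\mathbb{R}^n$ rests on Riesz potential estimates and Adams–Hedberg theory, to Newton–Sobolev capacity on a doubling metric space requires a careful covering argument and use of the Poincaré inequality. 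A secondary difficulty is verifying that the patched $v$ actually belongs to $N^{1,p}(X)$ globally rather than merely being Hölder locally, since an upper gradient must be constructed across $\partial O$ with global $L^p$ control.
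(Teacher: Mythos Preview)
The paper does not actually prove this lemma; it is stated with a bare citation to Haj\l asz--Kinnunen \cite[Theorem~5.3, Remark~5.21]{MR1681586}. Your outline is precisely the argument of that reference: the telescoping Poincar\'e chain producing the pointwise bound $|u(x)-u(y)|\lesssim d(x,y)^{\beta}\bigl((\mathcal{M}_{(1-\beta)p,R}g_u^p(x))^{1/p}+(\mathcal{M}_{(1-\beta)p,R}g_u^p(y))^{1/p}\bigr)$, then the capacitary weak-type inequality \cite[Lemma~3.15]{MR1681586} for the superlevel sets of the fractional maximal function, and finally a McShane-type extension off the good set.

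One small comment on your ``secondary difficulty'': rather than patching an upper gradient across $\partial O$ as you suggest, the cleaner route in \cite{MR1681586} is to work in the Haj\l asz framework. The pointwise inequality above already exhibits a Haj\l asz gradient for the restriction $u|_{X\setminus O}$, and the McShane extension preserves Haj\l asz gradients up to a constant; membership in $M^{1,p}(X)=N^{1,p}(X)$ then follows directly without ever constructing an upper gradient by hand. This dissolves the obstacle you flagged.
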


 \begin{lemma} \label{ball estimate on X}
     Let $1<p<Q$ and $1 \leq h \leq \frac{Qp}{Q-p}$ (or $p=Q$ and $h \geq 1$). Suppose $u \in N^{1,p}(X)$ and $g$ is a $p-$integrable upper gradient of $u$ on $X$. Then for any measurable set $S \subset X$, there exist constants $C>0$ and $\sigma>0$ such that
     $$\left(\frac{1}{\mu(B)}\int_{B \setminus S} |u|^h~d\mu \right)^{\frac{1}{h}}  \leq C \left( diam(B) \left( \frac{1}{\mu(B)} \int_{\sigma B} g^p~d\mu\right)^{\frac{1}{p}}+\left[\frac{\mu(B \setminus S)}{\mu(B)} \right]^{\frac{1}{h}}\left(\frac{1}{\mu(B)}\int_B|u|^h~d\mu \right)^{\frac{1}{h}} \right) .$$

 \end{lemma}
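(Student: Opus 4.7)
The plan is to reduce this to the Sobolev--Poincar\'e inequality available on $X$ under the standing hypotheses, combined with a triangle inequality split around the average $u_B$. Under our assumptions (complete doubling metric space supporting a $p$-Poincar\'e inequality with relative lower volume decay of order $Q$), one has the Sobolev--Poincar\'e inequality
\begin{equation*}
\left(\fint_B |u - u_B|^{h}\,d\mu\right)^{1/h} \leq C\, diam(B)\left(\fint_{\sigma B} g^p\,d\mu\right)^{1/p}
\end{equation*}
for every $h$ in the stated range (see e.g.\ \cite{MR2867756} and \cite{Book}); for $h$ strictly below the critical Sobolev exponent $Qp/(Q-p)$ one can also derive this by first applying Sobolev--Poincar\'e at the critical exponent and then using Jensen's inequality to lower the exponent on the left-hand side.

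With this estimate in hand, I would write $|u| \leq |u - u_B| + |u_B|$ and apply Minkowski's inequality for the $L^h$ norm on $B\setminus S$ with respect to the measure $\mu/\mu(B)$:
\begin{equation*}
\left(\frac{1}{\mu(B)}\int_{B\setminus S} |u|^h\,d\mu\right)^{1/h} \leq \left(\frac{1}{\mu(B)}\int_{B\setminus S} |u - u_B|^h\,d\mu\right)^{1/h} + |u_B|\left(\frac{\mu(B\setminus S)}{\mu(B)}\right)^{1/h}.
\end{equation*}
The first summand on the right, by simply enlarging the domain of integration from $B\setminus S$ to $B$, is dominated by $\left(\fint_B |u - u_B|^h\,d\mu\right)^{1/h}$, and hence by the Sobolev--Poincar\'e estimate above is bounded by the first term of the claimed inequality. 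For the second summand, Jensen's inequality yields $|u_B| \leq \fint_B |u|\,d\mu \leq \left(\fint_B |u|^h\,d\mu\right)^{1/h}$, which produces exactly the second term of the claim.

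I do not anticipate a substantive obstacle: the only nontrivial ingredient is invoking the metric Sobolev--Poincar\'e inequality in its correct form across the full range of $h$ (in particular, covering the endpoint $h = Qp/(Q-p)$ when $1<p<Q$ and any finite $h \geq 1$ when $p = Q$, which in the borderline case uses that the $p$-Poincar\'e inequality self-improves to yield Sobolev embeddings for all $h < \infty$). Once this input is cited, the remainder is a routine application of Minkowski and Jensen.
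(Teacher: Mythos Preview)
Your proposal is correct and follows essentially the same route as the paper: split around $u_B$ via Minkowski, control the oscillation term by the $(h,p)$--Sobolev--Poincar\'e inequality, and control the constant term by $|u_B|\le(\fint_B|u|^h\,d\mu)^{1/h}$. The only cosmetic difference is that the paper makes the derivation of the $(h,p)$--Poincar\'e inequality explicit---reducing to the critical exponent $p^*$ via H\"older when $p<Q$, and extracting the $h$-th power from the Trudinger-type exponential integrability (\cite[Corollary~9.1.36]{Book}) when $p=Q$---whereas you cite the inequality directly.
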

 \begin{proof}
     Let us first assume that $1<p<Q$ and $1 \leq h \leq p^{*}$, where $p^{*}:=\frac{Qp}{Q-p}$. By the H\"older inequality,
     $$\left(\fint_B|u-u_B|^h~d\mu \right)^{\frac{1}{h}} \leq \left(\fint_B|u-u_B|^{p^{*}}~d\mu \right)^{\frac{1}{p^{*}}}.$$
    Since $g|_B$ is a $p-$integrable upper gradient of $u|_B$, combining the above inequality with \cite[Theorem 8.3.2, Corollary 9.1.36]{Book}, we obtain
     \begin{equation} \label{E1}
     \left(\fint_B|u-u_B|^h~d\mu \right)^{\frac{1}{h}} \lesssim diam(B)\left(\fint_{\sigma B} g^p~d\mu \right)^{\frac{1}{p}}.
    \end{equation}
    Now by Minkowski's inequality
    \begin{equation} \label{E2}
    \left(\int_{B \setminus S}|u|^h~d\mu \right)^{\frac{1}{h}} \leq \left(\int_{B}|u-u_B|^h~d\mu \right)^{\frac{1}{h}}+\left(\mu(B \setminus S) \right)^{\frac{1}{h}}|u_B|.
    \end{equation}
    Again by an application of H\"older's inequality, we have
    \begin{eqnarray} \label{E3}
    \left(\mu(B \setminus S) \right)^{\frac{1}{h}}|u_B| & \leq & \left(\int_B|u|^h~d\mu \right)^{\frac{1}{h}}\left[\frac{\mu(B \setminus S)}{\mu(B)} \right]^{\frac{1}{h}}.
    %\nonumber \\ & \leq & \zeta^{\frac{1}{h}}\left(\int_B|u|^h~d\mu \right)^{\frac{1}{h}}.
    \end{eqnarray}
    Combining (\ref{E1}), (\ref{E2}) and (\ref{E3}), we get the required estimate.

    Now assume $p=Q$ and $h \geq 1$. We can find unique $n \in \mathbb{N} \cup \{0\}$ such that $n< \frac{h(Q-1)}{Q} \leq n+1$. Therefore, for any $t \geq 0$,
    $$exp\left(t^{\frac{Q}{Q-1}}\right) \geq \frac{\left(t^{\frac{Q}{Q-1}}\right)^{n+1}}{(n+1)!} \geq \frac{t^h}{\left(\lceil{\frac{h(Q-1)}{Q}} \rceil+1 \right)!},$$
    where $\lceil{\cdot} \rceil$ is the ceiling function. Using the above observation in combination with \cite[Corollary 9.1.36]{Book}, we get 
    \begin{equation} \label{Case p=Q}
        \left(\fint_B|u-u_B|^h~d\mu \right)^{\frac{1}{h}} \lesssim diam(B)\left(\fint_{\sigma B} g^Q~d\mu \right)^{\frac{1}{Q}}.
    \end{equation}
    The result follows by combining (\ref{E2}), (\ref{E3}) and (\ref{Case p=Q}).
 \end{proof}

 \section{Main Theorem}

 \begin{theorem} \label{MT}
     Let $0<\delta<\infty$, $1<q<p$ and $u \in N^{1,p}(X)$ be defined as in Lemma \ref{Holder QC}. Then
     \begin{itemize} 
     \item[(i)] If $p<Q$ and $0 < h \leq \frac{Qp}{Q-p}$ (or $p=Q$ and $h>0$), then for $q-$q.e. $x \in X$ ,
     $$\int_0^{\delta}  \left(\fint_{B(x,r)}|u-u(x)|^h ~d\mu \right)^{\frac{p}{h(p-1)}} \frac{dr}{r}< \infty.$$
     
     %\item[(ii] If $p=Q$, then for $q-$q.e. $x \in X$ and all $h>0$, 
    % $$\int_0^{\delta}  \left(\fint_{B(x,r)}|u-u(x)|^h ~d\mu \right)^{\frac{p}{h(p-1)}} \frac{dr}{r}< \infty.$$
     
   \item[(ii)] If $p>Q$ and $h>0$, then
   $$\int_X\left(\int_0^{\delta}  \left(\fint_{B(x,r)}|u-u(x)|^h ~d\mu \right)^{\frac{p}{h(p-1)}} \frac{dr}{r} \right)^{p-1}~d\mu< \infty.$$
    \end{itemize}
 \end{theorem}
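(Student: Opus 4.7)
The plan is to adapt Latvala's strategy via Lemmas~\ref{Lemma2}, \ref{Capacity}, \ref{Holder QC}, and \ref{ball estimate on X}. Set $\beta := 1 - q/p$, so $(1-\beta)p = q$ and $\beta \in (0, 1 - 1/p)$. For each $j \in \mathbb{N}$, I apply Lemma~\ref{Holder QC} with $\epsilon = 2^{-j}$ to produce an open set $O_j$ with $C_q(O_j) < 2^{-j}$ and a function $v_j \in N^{1,p}(X)$, H\"older continuous with exponent $\beta$ on bounded sets, satisfying $u = v_j$ on $X \setminus O_j$; put $w_j := u - v_j \in N^{1,p}(X)$. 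Define $E$ to be the union of $\bigcap_{j} \overline{O_j}^q$ with the $C_q$-null sets supplied by Lemma~\ref{Capacity} applied to each $w_j$. Lemma~\ref{closure} gives $C_q(\overline{O_j}^q) = C_q(O_j) < 2^{-j}$, so $C_q(E) = 0$. For $x \notin E$, I pick $j$ with $x \notin \overline{O_j}^q$: then $O_j$ is $q$-thin at $x$ and (since $O_j \subset \overline{O_j}^q$) $v_j(x) = u(x)$.

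Let $\alpha_j(r) := \mu(O_j \cap B(x,r))/\mu(B(x,r))$. Combining the $q$-thinness with Lemma~\ref{CapVSmeasure} yields $\int_0^\delta \alpha_j(r)^{1/(q-1)} \, dr/r < \infty$. The doubling property forces $\alpha_j(r) \to 0$ as $r \to 0$: if $\alpha_j(r_n) \geq \varepsilon$ for some $r_n \to 0$, doubling gives $\alpha_j \gtrsim \varepsilon$ on $[r_n, 2r_n]$ and the integral diverges. Hence for some $r_0 > 0$ the absorption in Lemma~\ref{ball estimate on X} is valid for $r < r_0$. Writing $|u - u(x)| \leq |w_j| + |v_j - v_j(x)|$, the H\"older bound gives $(\fint_B |v_j - v_j(x)|^h \, d\mu)^{1/h} \leq C r^\beta$. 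For $w_j$ I apply Lemma~\ref{ball estimate on X} with $S = X \setminus O_j$; the identity $(\fint_B |w_j|^h)^{1/h} = \alpha_j(r)^{1/h} (\fint_{B \cap O_j} |w_j|^h)^{1/h}$ (immediate from $w_j \equiv 0$ off $O_j$), together with absorption, yields
\[
\left(\fint_B |w_j|^h \, d\mu \right)^{1/h} \lesssim \alpha_j(r)^{1/h} \, r \left(\fint_{\sigma B} g_{w_j}^p \, d\mu\right)^{1/p}, \qquad r < r_0.
\]
Raising the combined estimate to the $p/(p-1)$ power and integrating against $dr/r$ on $(0, r_0)$ produces $\int_0^{r_0} r^{(p-q)/(p-1) - 1} \, dr$, convergent since $(p-q)/(p-1) > 0$, and $\int_0^{r_0} \alpha_j(r)^{p/(h(p-1))} (r^p \fint_{\sigma B} g_{w_j}^p)^{1/(p-1)} \, dr/r$, bounded (via $\alpha_j \leq 1$) by the finite quantity supplied by Lemma~\ref{Capacity} for $x \notin E$. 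The tail $r \in [r_0, \delta]$ contributes finitely via Sobolev embedding, which places $u$ in $L^h_{\mathrm{loc}}(X)$ under the stated hypotheses on $h$. The case $0 < h < 1$ reduces to $h = p$ by Jensen's inequality $(\fint_B |\cdot|^h)^{1/h} \leq (\fint_B |\cdot|^p)^{1/p}$.

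For part (ii), the case $p > Q$ is considerably simpler: the Morrey--Sobolev inequality in the metric setting yields $|u(y) - u(x)| \lesssim r (\fint_{\sigma B(x,r)} g_u^p \, d\mu)^{1/p}$ for every $x \in X$ and $y \in B(x,r)$. Hence $(\fint_B |u - u(x)|^h \, d\mu)^{p/(h(p-1))} \lesssim (r^p \fint_{\sigma B} g_u^p)^{1/(p-1)}$, and after the change of variable $s = \sigma r$ the claim reduces directly to Lemma~\ref{Lemma2} applied with $g = g_u \in L^p(X)$.

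The hardest step is the absorption in part (i): Lemma~\ref{ball estimate on X} produces a self-referential bound, and absorbing it requires the pointwise decay $\alpha_j(r) \to 0$ as $r \to 0$, which is not automatic from the finiteness of the Wiener integral and is recovered only by combining $q$-thinness with the doubling property of $\mu$ in an essential way.
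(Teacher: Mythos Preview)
Your proof is correct and follows the same Latvala-type strategy as the paper, with two differences in execution worth recording. First, you decompose the \emph{function} via $|u-u(x)| \leq |w_j| + |v_j - v_j(x)|$ and apply Lemma~\ref{ball estimate on X} to $w_j$, so the gradient term carries $g_{w_j}$ and Lemma~\ref{Capacity} must be invoked once per $j$; the paper instead splits the \emph{domain} as $B = (B \cap V) \cup (B \setminus V)$ with $V = X \setminus \overline{O_k^q}$, applies Lemma~\ref{ball estimate on X} directly to $w=|u-u(x)|$, and ends up with $g_u$, so Lemma~\ref{Capacity} is used only once. Second, for the decay $\alpha_j(r)\to 0$ needed in the absorption step you give an elementary doubling argument (a non-vanishing subsequence forces $\alpha_j\gtrsim\varepsilon$ on disjoint dyadic shells, contradicting $\int_0^\delta \alpha_j^{1/(q-1)}\,dr/r<\infty$), whereas the paper appeals to the bi-Lipschitz geodesic structure of $X$ to obtain continuity of the density function and deduces the limit from that; your route sidesteps this extra machinery. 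One minor slip: your displayed bound $(\fint_B |w_j|^h)^{1/h} \lesssim \alpha_j(r)^{1/h}\, r (\fint_{\sigma B} g_{w_j}^p)^{1/p}$ carries an unjustified factor $\alpha_j(r)^{1/h}$ --- what absorption in Lemma~\ref{ball estimate on X} actually yields is $(\fint_B |w_j|^h)^{1/h} \lesssim r(\fint_{\sigma B} g_{w_j}^p)^{1/p}$ --- but since you immediately bound $\alpha_j\le 1$ this does not affect the argument. Part~(ii) coincides with the paper.
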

\begin{remark} Under the assumptions of the above theorem, the following integral condition is satisfied in each case,  $$\int_0^{1}  \left(\fint_{B(x,r)}|u-u(x)|^h ~d\mu \right)^{\frac{p}{h(p-1)}} \frac{dr}{r}< \infty~~~\text{for} ~q-q.e.~x \in X.$$
%This fact will be more transparent from the proof of Theorem \ref{MT}. 
On discretization, it implies
$$\sum_{k=1}^{\infty}\left(\fint_{B(x,2^{-k})}|u-u(x)|^h ~d\mu \right)^{\frac{p}{h(p-1)}}< \infty~~~\text{for} ~q-q.e.~x \in X. $$
This gives us the Lebesgue point property
$$\underset{r \to 0}{\lim} \frac{1}{\mu(B(x,r))}\int_{B(x,r)}|u-u(x)|^h~d\mu=0~~q-q.e.~x \in X$$
\end{remark}
The preceding remark shows that, in a certain context, Theorem \ref{MT} is a stronger than the Lebesgue point property. However, it is important to note that the exceptional set obtained in this case is not as narrow as in the actual Lebesgue point property.
 \begin{proof}[Proof of Theorem \ref{MT}]
 
%First observe that the function $u$ is defined pointwise everywhere by (\ref{LD-eqn}). Therefore, $u(x)$ is finite a.e. 
%we split our proof into three cases.
\begin{itemize}

 \item[(i)]  We first consider the case $p<Q$ and $0 < h \leq \frac{Qp}{Q-p}$.
By Lemma \ref{Holder QC}, for each $k \in \mathbb{N}$ there exist open sets $O_k \subset X$ with $C_q(O_k) <\frac{1}{2^k}$ and functions $v_k \in N^{1,p}(X)$ with $v_k=u$ everywhere in $X \setminus O_k$ such that $v_k$'s are H\"older continuous with  exponent $1-\frac{q}{p}$ on bounded subsets of $X$. 

 For a fixed $\delta>0$, let $E$ be as in Lemma \ref{Capacity} mand set
$ Z:= E \cup \underset{k \in \mathbb{N}}{\cap} \bar{O^q_k}$. Then, using Lemma \ref{Capacity} and Lemma \ref{closure}, we get that $C_q(Z)=0$. For a fixed $x \in X \setminus Z$, choose $k \in \mathbb{N}$ such that $x \in X \setminus \bar{O^q_k}$. Applying Lemma \ref{Holder QC}, we have, for any $r< \delta$,
\begin{equation} \label{HC-eqn}
|v_k(y) - v_k(x)| \lesssim \left( d(x,y) \right)^{1- \frac{q}{p}},
\end{equation}
where $y \in B(x,r) \cap V$ with $V:= \left(\bar{O^q_k} \right)^c$.  Since  $u(x)$ is finite $p-$q.e. $x \in X$ (\cite[Proposition 1.30]{MR2867756}), define $w=|u-u(x)|$ and notice that any upper gradient of $u$ is an upper gradient of $w$. Now decompose the integral by writing
\begin{equation} \label{split}
\left(\fint_{B(x,r)}w^h ~d\mu \right)^{\frac{p}{h(p-1)}}   \leq I_1 + I_2,
\end{equation}
where 
$$I_1:=\left( \frac{2}{\mu(B(x,r))} \int_{B(x,r) \setminus V} w^h ~d\mu \right)^{\frac{p}{h(p-1)}}~ \text{and}~~I_2:= \left(\frac{2}{\mu(B(x,r))}\int_{B(x,r) \cap V} w^h ~d\mu \right)^{\frac{p}{h(p-1)}}.$$

 Setting $\zeta_r:= \frac{\mu\left(B(x,r) \setminus V \right)}{\mu(B(x,r))}$ and $\theta_r:= \frac{diam(B(x,r))^p}{\mu(B(x,r))}$, assume first that $h \geq 1$. We estimate $I_1$ by using Lemma \ref{ball estimate on X} and obtain
\begin{eqnarray*} \label{estimate1}
I_1  & \leq & \left(2^{\frac{1}{h}}C \right)^{\frac{p}{p-1}} \left[ \left( \theta_r \int_{ B(x,\sigma r)} g_u^p~d\mu\right)^{\frac{1}{p}} +\left(\zeta_r \fint_{B(x,r)} w^h~d\mu \right)^{\frac{1}{h}} \right]^{\frac{p}{p-1}} \\ & \leq & \left(2^{1+\frac{1}{h}}C \right)^{\frac{p}{p-1}} \left[ \left( \theta_r \int_{B(x,\sigma r)} g_u^p~d\mu\right)^{\frac{1}{p-1}}+\left(\zeta_r\fint_{B(x,r)} w^h~d\mu \right)^{\frac{p}{h(p-1)}} \right].
\end{eqnarray*}
$I_2$ can be easily estimated using (\ref{HC-eqn}) as
\begin{eqnarray*}
I_2 & = &  \left(\frac{2}{\mu(B(x,r))}\int_{B(x,r) \cap V} |v_k(y)-v_k(x)|^h~d\mu \right)^{\frac{p}{h(p-1)}}  \\ & \leq & 2^{\frac{p}{h(p-1)}}  r^{\frac{p-q}{p-1}}.
\end{eqnarray*}
Therefore, from (\ref{split}), we have
\begin{equation*}
 \left(1- \left(2^{1+\frac{1}{h}} \zeta_r^{\frac{1}{h}} C \right)^{\frac{p}{p-1}} \right)\left(\fint_{B(x,r)}w^h ~d\mu \right)^{\frac{p}{h(p-1)}}  \lesssim  r^{\frac{p-q}{p-1}}+ \left( \frac{r^p}{\mu(B(x,r))} \int_{ B(x,\sigma r)} g_u^p~d\mu\right)^{\frac{1}{p-1}}
\end{equation*}
 As $\bar{O_k^q}$ is $q-$finely closed, $X \setminus \bar{O^q_k}$ is $q-$finely open. Therefore, $\bar{O_k^q}$ is $q-$thin at $x$. Applying Lemma \ref{CapVSmeasure}, we get
$$\int_0^{\delta} \left( \frac{\mu(B(x,r) \cap V)}{\mu(B(x,r))} \right)^{\frac{1}{p-1}}\frac{dr}{r}< \infty.$$
Since a complete and doubling metric space $X$ that supports a $p-$Poincar\'e inequality admits a geodesic metric that is biLipschitz equivalent to the underlying metric (see \cite[Corollary 8.3.16]{Book}), the density function $r \mapsto \frac{\mu(B(x,r) \cap V)}{\mu(B(x,r))}$ is continuous (\cite[Lemma 12.1.2]{Book}). Therefore, we must have
$$\underset{r \to 0^{+}}{\lim}~ \frac{\mu(B(x,r) \cap V)}{\mu(B(x,r))}=0.$$
 Choosing $r>0$ small enough such that $\zeta_r < \frac{1}{2^{h+1}C^h}$, we have 
\begin{equation} \label{limit}
\left(\fint_{B(x,r)}w^h ~d\mu \right)^{\frac{p}{h(p-1)}} \lesssim r^{\frac{p-q}{p-1}}+
\left( \frac{r^p}{\mu(B(x,r))} \int_{ B(x,\sigma r)} g_u^p~d\mu\right)^{\frac{1}{p-1}}.
\end{equation}
From H\"older's inequality, it follows that for sufficiently small $r>0$, (\ref{limit}) holds for any $0<h \leq \frac{Qp}{Q-p}$. The proof of $(i)$ is complete after applying Corollary \ref{cor3.2} and Lemma \ref{Capacity}.

 The case $p=Q$ and $h>0$ follows along similar lines.

\item[(ii)] Let $$I:= \int_X\left(\int_0^{\delta}  \left(\fint_{B(x,r)}|u-u(x)|^h ~d\mu \right)^{\frac{p}{h(p-1)}} \frac{dr}{r}\right)^{p-1}~d\mu.$$ 
     Applying \cite[Corollary 9.1.36(iii)]{Book}, for some
 $\sigma>0$ we have
     \begin{eqnarray*}
       I  & \lesssim & \int_X\left(\int_0^{\delta} \left( \fint_{B(x,r)} \left(diam(B(x,r))\left( \fint_{B(x,\sigma r)}g_u^p~d\mu\right)^{\frac{1}{p}}\right)^h ~d\mu \right)^{\frac{p}{h(p-1)}} \frac{dr}{r}\right)^{p-1}~d\mu \\ & \lesssim & 
       \int_X \left( \int_0^{\delta}  \left( r^p  \fint_{B(x,\sigma r)}g_u^p~d\mu \right)^{\frac{1}{p-1}} \frac{dr}{r} \right)^{p-1}~d\mu \\ & \lesssim & 
       \int_X \left(\int_0^{\frac{\delta}{\sigma}}  \left(r^p \fint_{B(x,r)}g_u^p~d\mu\right)^{\frac{1}{p-1}}\frac{dr}{r}\right)^{p-1}~d\mu.
     \end{eqnarray*}
     The result follows from Lemma \ref{Lemma2}. 
\end{itemize}  
 \end{proof}
% \begin{remark} For the case $p>Q$, we do not need the assumption $0 < h \leq \frac{Qp}{Q-p}$. In this case, the result holds for any positive real number $h$.
% \end{remark}
\begin{remark}
    Suppose that the conditions of Theorem \ref{MT} are satisfied. If $\mu(X)<\infty$, it becomes evident that our proof leads to the stronger conclusion:
    $$\int_X\left(\int_0^{\delta}  \left(\fint_{B(x,r)}|u-u(x)|^h ~d\mu \right)^{\frac{p}{h(p-1)}} \frac{dr}{r} \right)^{p-1}~d\mu< \infty$$
    in each of the cases.
\end{remark}

\section*{Acknowledgements}
\begin{itemize}
\item[(i)] We would like to express our gratitude to Prof. Anders Bj\"{o}rn for his constructive comments and insightful suggestions, which greatly enriched the quality of this research.
\item[(ii)] We deeply appreciate the insightful comments and thorough evaluation provided by the reviewer, which significantly contributed to the improvement of our research paper.
\end{itemize}
\section*{Declarations}
%\section*{Data Availability}
%
%No data were used to support this study.
%
\subsection*{Conflicts of Interest}
The authors declare that there is no conflict of interest.
\subsection*{Funding}
The first author (M. Ashraf Bhat) is supported by Prime Minister's Research Fellowship (PMRF) program (PMRF ID: 2901480).
%This research work did not receive any external funding.
%
%%\section*{Authors' Contributions}
%%The authors contributed equally to the writing of this paper. All the authors read and approved the manuscript.

\end{document}